\documentclass[11pt]{amsart}
\usepackage{amscd,amssymb,float}

\newtheorem{theorem}{Theorem}[section]
\newtheorem{lemma}[theorem]{Lemma}

\newtheorem{corollary}[theorem]{Corollary}
\newtheorem{proposition}[theorem]{Proposition}

\renewcommand{\leq}{\leqslant}
\renewcommand{\geq}{\geqslant}

\theoremstyle{definition}

\theoremstyle{definition}

\numberwithin{equation}{section}

\numberwithin{equation}{section} \numberwithin{figure}{section}

\author{}
\address{}
\address{}
\email{}
\title{Almost Non-positive  K\"{a}hler Manifolds} 
\author{Yuguang Zhang}
\address{Xi'an Jiaotong-Liverpool University, Suzhou, China}
\email{Yuguang.Zhang@xjtlu.edu.cn}

\begin{document}  
\begin{abstract}
This paper proves that the universal covering   of a compact K\"{a}hler manifold with small positive sectional curvature in a certain sense  is contractible.  
\end{abstract}

\maketitle

 \section{Introduction}
  The Cartan–Hadamard theorem shows that the universal covering of a  Riemannian manifold with non-positive sectional curvature is  the Euclidean space, which  has been generalised to the case of almost non-positive curved manifolds, i.e. manifolds with small positive curvature. More precisely,  a theorem due to  Fukaya and Yamaguchi (Theorem 16.11 in \cite{Fu} and \cite{FuY}) asserts that there exists a constant $\epsilon >0$ such that if $(M, g)$ is a Riemannian manifold and $$-1 \leq K_{g} \leq \epsilon,  \  \  \  \  \  \   {\rm diam}_g(M)\leq D,  $$ where ${\rm diam}_g(M)$ denotes the diameter and $K_{g}$ is the Riemannian sectional curvature, then 
  the universal covering
space of $M$ is diffeomorphic to the Euclidean space.  The condition  of  the curvature  lower bound   cannot be removed, and in fact, a counterexample, i.e. the existence of almost non-positive curvature  metrics  on  $S^3$, has been discovered  by  Gromov and  Buser-Gromoll. 

 This paper studies   K\"{a}hler manifolds with small positive curvature. Of course, the Fukaya-Yamaguchi theorem still holds in this case since a K\"{a}hler metric is  Riemannian.   However, we want to  replace the diameter by a more computable cohomological  quantity in K\"{a}hler geometry, and remove the hypothesis of curvature's   lower bounds.

  Let $(X,\omega_g, g)$ be a compact
$n$-dimensional K\"{a}hler manifold,
  where  $g$
is a K\"{a}hler metric and $\omega_g$ denotes the  K\"{a}hler form of $g$. Here $X$ means a smooth manifold $M$ equipped with a complex structure $J$.   If $\tilde{g}$ is another K\"{a}hler metric and
$\omega_{\tilde{g}}$ is the K\"{a}hler form associated to $\tilde{g}$, the energy of
$\tilde{g}$ with respect to the background metric $g$ is defined  by
 \begin{equation}E_{g}(\tilde{g})=\frac{1}{(n-1)!} \int_{X}\omega_{\tilde{g}}\wedge
\omega_g^{n-1}=\int_{X}e(\tilde{g})dv_{g}, \end{equation}
 where $$e(\tilde{g})=\frac{1}{2}{\rm tr}_{g}\tilde{g}={\rm tr}_{\omega_g}\omega_{\tilde{g}}.$$  Note that  $E_{g}(\tilde{g})$ depends only on the
 cohomology classes $[\omega_{\tilde{g}}]$ and $[\omega_g] \in
 H^{1,1}(X,\mathbb{R})$.  

The main result is the following theorem. 

  \begin{theorem}\label{main}   Let $(X, \omega_g,  g)$ be a compact
K\"{a}hler manifold of complex dimension $n$. There exists a constant $\epsilon =\epsilon(X, [\omega_g])>0$ depending only on the complex manifold  $X$ and the K\"{a}hler class $ [\omega_g]\in H^{1,1}(X, \mathbb{R})$ satisfying that
if there is a 
K\"{a}hler metric $\tilde{g}$ such that  \begin{equation}\label{eq0.1}
K_{\tilde{g}} E_{g}(\tilde{g}) \leq \epsilon,
\end{equation}  where $K_{\tilde{g}}$ is  the Riemannian  sectional curvature
of $\tilde{g}$, then  
\begin{itemize}
\item[(i)]  the universal covering space of $X$ is
contractible, and
\item[(ii)] the holomorphic cotangent bundle $T^{*(1,0)}X$  is numerically   effective (nef).  
\end{itemize}
\end{theorem}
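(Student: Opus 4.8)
The plan is to treat the two conclusions in the reverse of the stated order: first establish the numerical effectivity of $T^{*(1,0)}X$ in (ii), and then deduce the contractibility (i) from it together with the curvature hypothesis. Throughout I normalise by rescaling $\tilde{g}\mapsto (\sup K_{\tilde{g}})\,\tilde{g}$, which is harmless because $K_{\tilde{g}}E_{g}(\tilde{g})$ is invariant under scaling of $\tilde{g}$; after this normalisation $K_{\tilde{g}}\leq 1$ and the hypothesis reads $E_{g}(\tilde{g})\leq\epsilon$. The Kähler condition lets me pass between the Riemannian curvature of $\tilde{g}$ and the Chern curvature of $(T^{(1,0)}X,\tilde{g})$: an upper bound $K_{\tilde{g}}\leq s$ on all real sectional curvatures forces the holomorphic bisectional curvature to be $\leq 2s$, so the Chern curvature of the dual bundle $T^{*(1,0)}X$ is $\geq -2s$ in the Griffiths sense. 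By the standard correspondence on $\mathbb{P}(T^{*(1,0)}X)$, the induced metric on the tautological line bundle $\mathcal{O}(1)$ then has curvature $\geq -2s\,\pi^{*}\omega_{\tilde{g}}$ (the Fubini--Study contribution along the fibres only helps), where $\pi$ is the projection.

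For (ii) I argue by contraposition. If $T^{*(1,0)}X$ is not nef, then $\mathcal{O}(1)$ is not nef, so there is an irreducible curve $C$ with $\mathcal{O}(1)\cdot C<0$; as $\mathcal{O}(1)$ restricts to an ample bundle on each fibre $\mathbb{P}^{n-1}$, the curve $C$ is not $\pi$-contracted, hence $\beta:=\pi_{*}[C]$ is a nonzero effective curve class. Integrating the curvature estimate over $C$ gives
\begin{equation}
0 > \mathcal{O}(1)\cdot C \;\geq\; -2\,(\sup K_{\tilde{g}})\int_{C}\pi^{*}\omega_{\tilde{g}} \;=\; -2\,(\sup K_{\tilde{g}})\,[\omega_{\tilde{g}}]\cdot\beta,
\end{equation}
so $\sup K_{\tilde{g}}\cdot\big([\omega_{\tilde{g}}]\cdot\beta\big)\geq\tfrac12|\mathcal{O}(1)\cdot C|$. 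Multiplying by $E_{g}(\tilde{g})=\frac{1}{(n-1)!}[\omega_{\tilde{g}}]\cdot[\omega_{g}]^{n-1}$ and using
\begin{equation}
\frac{[\omega_{\tilde{g}}]\cdot[\omega_{g}]^{n-1}/(n-1)!}{[\omega_{\tilde{g}}]\cdot\beta}\;\geq\; c_{0}:=\inf_{[\omega]\in\mathcal{K}}\frac{[\omega]\cdot[\omega_{g}]^{n-1}/(n-1)!}{[\omega]\cdot\beta}
\end{equation}
yields $K_{\tilde{g}}E_{g}(\tilde{g})\geq\tfrac12 c_{0}\,|\mathcal{O}(1)\cdot C|$. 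The crucial point is $c_{0}>0$: a nonzero element of the closed Kähler cone is represented by a nonzero closed positive current, and pairing such a current against the strictly positive smooth form $\omega_{g}^{n-1}$ is strictly positive; thus $[\omega_{g}]^{n-1}$ lies in the interior of the dual of $\mathcal{K}$, so $[\omega_{g}]^{n-1}/(n-1)!-\lambda\beta$ stays in that dual cone for all small $\lambda>0$, which is exactly $c_{0}>0$. Since $C$, $\beta$ and $c_{0}$ depend only on $X$ and $[\omega_{g}]$, choosing $\epsilon<\tfrac12 c_{0}|\mathcal{O}(1)\cdot C|$ makes $K_{\tilde{g}}E_{g}(\tilde{g})\leq\epsilon$ incompatible with the failure of nef-ness, proving (ii).

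For (i) I feed in the nef-ness just obtained. First, $T^{*(1,0)}X$ nef forbids rational curves: a nonconstant $f:\mathbb{P}^{1}\to X$ would give a surjection $f^{*}T^{*(1,0)}X\twoheadrightarrow T^{*}\mathbb{P}^{1}=\mathcal{O}(-2)$, exhibiting the non-nef bundle $\mathcal{O}(-2)$ as a quotient of a nef one, which is impossible; hence $X$ is not uniruled and $K_{X}=\det T^{*(1,0)}X$ is nef. I then combine this algebraic rigidity with the almost-non-positive curvature on the universal cover $\widetilde{X}$ endowed with the lifted metric: I would show $\widetilde{X}$ is Stein with no nonconstant compact analytic subsets, and then upgrade this, via a Cartan--Hadamard/Fukaya--Yamaguchi comparison governed by $K_{\tilde{g}}\leq\kappa$ with $\kappa=\epsilon/E_{g}(\tilde{g})\to0$, to the vanishing of all higher homotopy, i.e. contractibility of $\widetilde{X}$.

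I expect the main obstacle to be precisely this last step. Unlike Fukaya--Yamaguchi, the hypothesis provides only an upper sectional-curvature bound and a cohomological (energy) bound, with no lower curvature bound and no diameter bound, so the metrics $\tilde{g}$ may genuinely collapse as $\epsilon\to 0$ and the classical collapsing theory does not apply directly. My strategy is to let the nef-ness of $T^{*(1,0)}X$ substitute for the missing lower bound: it excludes the positively curved necks that would obstruct a Cartan--Hadamard comparison, so that the local CAT$(\kappa)$ geometry coming from $K_{\tilde{g}}\leq\kappa$ can be globalised over $\widetilde{X}$. Making this mechanism quantitative---in particular arranging $\epsilon(X,[\omega_{g}])$ so that both the nef-ness argument in (ii) and the globalisation in (i) hold with the \emph{same} constant---is where I anticipate the real work lies.
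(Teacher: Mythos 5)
Your proposal has two genuine gaps, one in each part. For (ii), your contrapositive step ``if $T^{*(1,0)}X$ is not nef, then there is an irreducible curve $C$ in $\mathbb{P}(T^{*(1,0)}X)$ with $\mathcal{O}(1)\cdot C<0$'' is false in the generality of the theorem: the statement is for compact \emph{K\"ahler} manifolds, where nef-ness (Definition 1.9 in \cite{DPS}) is a metric condition that cannot be tested on curves --- a non-projective $X$ (or $\mathbb{P}(T^{*(1,0)}X)$) may carry no curves at all, or too few, so a non-nef bundle need not meet any curve negatively. Your cohomological comparison of $[\omega_{\tilde g}]\cdot\beta$ with $E_g(\tilde g)$ via the dual cone is a nice idea and would salvage a proof for projective $X$, but it does not cover the stated hypothesis. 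The paper instead proves a pointwise Schwarz-type inequality $\tilde g\leq C E_g(\tilde g)\,g$ (Proposition \ref{prop1}, via the Chern--Lu inequality and a quantitative Schoen--Uhlenbeck small-energy estimate), which converts the bisectional curvature bound into a Griffiths curvature lower bound $\geq -\frac{2\bar C}{k}\,\omega_g\otimes\mathrm{Id}$ against the \emph{fixed} form $\omega_g$ on every $S^mT^{*(1,0)}X$, and then applies the metric criterion of Theorem 1.12 in \cite{DPS}. That route needs no curves and works for all compact K\"ahler $X$.

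For (i), the decisive step is simply not there: you acknowledge that passing from ``nef cotangent bundle plus $K_{\tilde g}\leq\kappa$'' to contractibility of the universal cover is ``where the real work lies,'' and the mechanism you sketch (globalising a local CAT$(\kappa)$ comparison, with nef-ness substituting for the missing lower curvature bound) is not an argument --- indeed the Gromov/Buser--Gromoll metrics on $S^3$ cited in the introduction show that an upper curvature bound alone cannot force contractibility, and nef-ness of $T^{*(1,0)}X$ only excludes \emph{rational} curves, which says nothing about non-holomorphic spheres or higher homotopy groups. The paper's actual proof of (i) is independent of (ii) and runs through minimal two-spheres: if the universal cover is not contractible then $\pi_{m+2}(X)\neq 0$ for some $m$, Sacks--Uhlenbeck theory produces a nontrivial harmonic sphere $u_k:S^2\to(X,g_k)$ whose energy is bounded \emph{above} uniformly in $k$ thanks to the Schwarz inequality $g_k\leq\bar C g$, while the energy-gap Lemma \ref{le-gap} bounds the energy \emph{below} by $\pi/(24\overline K)\geq k\pi/24$, a contradiction for large $k$. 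Any successful proof along your lines would still need a substitute for both the Schwarz-type inequality (which you never establish) and the harmonic-sphere energy gap.
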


Here  the Riemannian  sectional curvature $K_{g}$ is regarded as a function  $K_{g}=K_{g}(x, \xi)$ of  a point $x\in X$, and a plane $\xi \subset T_xX$.  See Definition 1.9 in \cite{DPS} for the definition of nef vector bundles. 

There are many works on  the structure of  K\"{a}hler manifolds with non-positive bisectional curvature. For instance, a conjecture of Yau, proved by Liu, Wu-Zheng, and H\"{o}ring \cite{Liu,WZ,Ho}   under various assumptions,  shows that 
 a compact  K\"{a}hler manifold  with   non-positive bisectional curvature admits a  torus fibre 
bundle structure.  These results have been generalised to   K\"{a}hler manifolds with nef cotangent bundle by H\"{o}ring \cite{Ho}.  We apply (ii) of 
 Theorem \ref{main} to Theorem 1.2 and Theorem 1.4 in  \cite{Ho}, and obtain the following 
 corollary.  
 
  \begin{corollary}\label{co+0}     Let $(X, \omega_g,  g)$, $\epsilon=\epsilon(X, g)$, and $\tilde{g}$ be the same as those in Theorem \ref{main}. If either $\dim_{\mathbb{C}} X\leq 3$, or $X$ is a projective manifold with semi-ample canonical bundle, 
   then a finite covering space $X'$  of $X$ admits a torus  fibration $X'\rightarrow Y$ onto a  K\"{a}hler manifold $Y$ of negative first Chern class, i.e. $c_1(Y)<0$.  
 \end{corollary}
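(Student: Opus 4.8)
The plan is to treat the corollary as a direct consequence of part (ii) of Theorem~\ref{main} together with the classification results of \cite{Ho}. First I would observe that the hypothesis $K_{\tilde{g}}\,E_{g}(\tilde{g})\le\epsilon$ places us exactly in the setting of Theorem~\ref{main}, whose conclusion (ii) asserts that $T^{*(1,0)}X$ is nef. This is the single nontrivial input: it converts the differential-geometric smallness condition on $\tilde{g}$ into the algebro-geometric positivity property (nefness of the cotangent bundle, in the sense of Definition~1.9 of \cite{DPS}) on which H\"{o}ring's structure theory operates.

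Next I would match the two alternative hypotheses of the corollary against the running assumptions of the cited theorems in \cite{Ho}. In the case $\dim_{\mathbb{C}}X\le 3$ the relevant statement is Theorem~1.2, while in the projective case with semi-ample $K_X$ it is Theorem~1.4. Feeding nefness of $T^{*(1,0)}X$ into either theorem produces, after passing to a finite \'etale cover $X'\rightarrow X$, a fibration $f\colon X'\rightarrow Y$ whose general fibre is a complex torus and whose base $Y$ is a smooth K\"{a}hler manifold with $c_1(Y)<0$. Citing these results yields precisely the torus fibration and the negativity of the first Chern class demanded by the conclusion.

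The only genuine work is bookkeeping rather than new geometry: one must check that the notion of nef cotangent bundle used in \cite{Ho} agrees with that of \cite{DPS} invoked in Theorem~\ref{main}(ii), and that the finite cover and the base $Y$ inherit the K\"{a}hler (respectively projective) structure needed for the statement $c_1(Y)<0$ to make sense. Since the deep content resides entirely in Theorem~\ref{main}(ii), I do not expect any substantive obstacle here; the corollary follows formally once the cotangent bundle is known to be nef, and the proof reduces to verifying that the two cases of the hypothesis are exactly the two cases covered by Theorems~1.2 and 1.4 of \cite{Ho}.
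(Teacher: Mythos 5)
Your proposal matches the paper's own derivation exactly: the corollary is obtained by feeding the nefness of $T^{*(1,0)}X$ from Theorem \ref{main}(ii) into Theorems 1.2 and 1.4 of \cite{Ho}, with the two hypotheses of the corollary corresponding to the two cited theorems. The paper treats this as immediate and offers no further detail, so your bookkeeping remarks are, if anything, more careful than the source.
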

 
 Note that the  fibration obtained here may not be a  fibre 
bundle since the complex structures of fibre tori could vary.  As pointed out in  \cite{Ho}, there are examples of manifolds with nef cotangent bundle but not admitting  fibre 
bundle structures, e.g. 
the total spaces of universal families over compact curves in the moduli space
of polarised abelian varieties.   Furthermore, K\"{a}hler metrics of small positive  sectional curvature are expected to exist on these manifolds. More precisely, if $X\rightarrow Y$ is a fibration over a higher genus Riemann  surface $Y$ with polarised abelian varieties as  fibres,
the techniques developed in Section 3 of  \cite{GTZ} could  be used to construct a family of K\"{a}hler metrics $g_t$, $t\in (0, 1]$, on $X$ satisfying the following. 
\begin{itemize}
\item[(i)]   $g_t$ is a  collapsing   semi-flat metric, i.e.  the restrictions of $g_t$ on fibres  are flat, and   the diameters of fibres tend to zero when $t\rightarrow 0$.
\item[(ii)]   The pull-back metric  of $g_t$ on the universal covering space of $X$ converges smoothly to $g_H+g_E$ on $\Delta \times \mathbb{C}^{n-1}$, as $t\rightarrow 0$, where $g_H$ denotes the standard hyperbolic metric on the disc $\Delta$ and $g_E$ is the Euclidean metric. 
\item[(iii)]  The condition of Theorem \ref{main} is satisfied, i.e. $$ \sup\limits_X  K_{g_t}E_g(g_t) \rightarrow 0,  \  \  \  \  t\rightarrow 0.$$
\end{itemize}
We leave the details to interested readers.

  The idea to prove (i) in Theorem  \ref{main}  is as follows. Assume that the universal covering  of $X$ is not  
contractible and there is a sequence of K\"{a}hler metrics $g_k$ with $E_g(g_k)=1$ and sectional curvature $K_{g_k}\leq \frac{1}{k}$.  A theorem due to Sacks and Uhlenbeck (Theorem 5.8 in \cite{SU}) asserts that, for each  $k>0$,  there is a non-trivial smooth conformal branched minimal immersion $u: S^2 \rightarrow X$ with respect to the metric $g_k$. We further assume that $u$ is an embedding and consider the restricted metric $g_k|_{u(S^2)}$ on the image of $u$. Since the sectional curvature of a minimal surface is smaller or equal to the sectional curvature of the ambient  space, the Gauss-Bonnet formula shows $$4\pi =2\pi \chi (S^2)=\int _{S^2} K_{u^*g_k}dv_{u^*g_k} \leq \frac{1}{k}{\rm Vol}_{u^*g_k}(S^2). $$ If we can find a uniformly   upper bound of the volume, i.e. ${\rm Vol}_{u^*g_k}(S^2)<v$ for a constant $v$ independent of $k$, then it is a contradiction by letting $k\rightarrow\infty$. For achieving the upper bound of volumes, we need a  Schwarz type inequality for  K\"{a}hler metrics with small positive curvature, i.e. in the current case,  \begin{equation}\label{eq0.2} g_k \leq \bar{C} g \end{equation} for a constant $\bar{C}$ independent of $k$, which is obtained by Proposition \ref{prop1} in Section 2. (\ref{eq0.2}) also implies (ii) of Theorem \ref{main} by combining a direct computation.  Section 3 proves  Theorem \ref{main}.

We also expect a K\"{a}hler  analogue  of Gromov's almost flat manifolds.
If  there is a sequence of  K\"{a}hler metrics $g_k$ with $$E_g(g_k)=1,  \ \  \  \  \  \  -\frac{1}{k} \leq K_{g_k} \leq\frac{1}{k} $$  on a K\"{a}hler manifold $(X, g)$, then (\ref{eq0.2}) gives an upper bound of diameters, i.e. $${\rm diam}_{g_k}(X)\leq D,$$ for a constant $D>0$ independent of $k$.  When $k\gg 1$, $(X, g_k)$ satisfies  the hypothesis of the Gromov theorem for almost flat manifolds (Theorem 8.1 in \cite{Fu} and \cite{Gro}). Therefore, the Gromov theorem implies that a finite covering $X'$ of $X$ is diffeomorphic to a nil-manifold.   $X'$ carries K\"{a}hler structures by pulling back K\"{a}hler metrics on $X$. 
However, by Theorem A in \cite{BG},
none of  nil-manifold  other than torus admits a K\"{a}hler metric.
Hence we have proved the following result, i.e. almost flat K\"{a}hler manifolds are flat.

 \begin{corollary}\label{co1}     Let $(X, \omega_g,  g)$ be a compact
K\"{a}hler manifold of complex dimension $n$. There exists a constant $\epsilon =\epsilon(X, [\omega_g])>0$ depending only on the complex manifold  $X$ and the K\"{a}hler class $ [\omega_g]\in H^{1,1}(X, \mathbb{R})$ satisfying that
if there is a
K\"{a}hler metric $\tilde{g}$ such that $$|K_{\tilde{g}}| E_{g}(\tilde{g}) \leq
\epsilon, $$ where $K_{\tilde{g}}$ is  the Riemannian  sectional curvature
of $\tilde{g}$,  then  a finite covering of
$X$ is   a  torus. 
 \end{corollary}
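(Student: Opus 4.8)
The plan is to argue by contradiction, converting the scale--invariant hypothesis into a genuine almost flatness condition and then quoting Gromov's theorem together with the K\"{a}hler obstruction of Benson--Gordon. First I would note that $|K_{\tilde g}|\,E_g(\tilde g)$ is invariant under the homothety $\tilde g\mapsto\lambda\tilde g$: since $\omega_{\lambda\tilde g}=\lambda\omega_{\tilde g}$ we have $E_g(\lambda\tilde g)=\lambda E_g(\tilde g)$, while $K_{\lambda\tilde g}=\lambda^{-1}K_{\tilde g}$ pointwise, so the product is unchanged. Hence I may rescale each competing metric to satisfy $E_g(\tilde g)=1$, after which the hypothesis becomes the pointwise bound $|K_{\tilde g}|\leq\epsilon$. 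Assuming the corollary false, no finite covering of $X$ is a torus, and yet for every $k$ there is a K\"{a}hler metric $g_k$ with $E_g(g_k)=1$ and $-\tfrac1k\leq K_{g_k}\leq\tfrac1k$.

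Next I would extract a uniform diameter bound. The one--sided bound $K_{g_k}\leq\tfrac1k\leq 1$ places every $g_k$ in the regime of Proposition \ref{prop1}, whose conclusion (\ref{eq0.2}) gives $g_k\leq\bar C g$ with $\bar C=\bar C(X,[\omega_g])$ the constant attached to the curvature threshold $1$, hence independent of $k$. Comparing lengths of curves yields $\dist_{g_k}(p,q)\leq\sqrt{\bar C}\,\dist_g(p,q)$ for all $p,q$, and therefore ${\rm diam}_{g_k}(X)\leq D:=\sqrt{\bar C}\,{\rm diam}_g(X)$ uniformly in $k$. Consequently $|K_{g_k}|\cdot{\rm diam}_{g_k}(X)^2\leq \bar C\,{\rm diam}_g(X)^2/k\to 0$ as $k\to\infty$. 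For $k$ large this falls below the universal constant $\epsilon(2n)$ of Gromov's almost flat manifold theorem (Theorem 8.1 in \cite{Fu} and \cite{Gro}) in real dimension $2n$, so that theorem produces a finite covering $X'$ of $X$ diffeomorphic to a nilmanifold $N/\Gamma$.

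I would then bring in the K\"{a}hler constraint. As a covering of the complex manifold $X$, the space $X'$ is again compact K\"{a}hler (pull back $\omega_g$), so $X'$ is a compact K\"{a}hler nilmanifold; by Theorem A of \cite{BG} such a nilmanifold must be a torus, whence $X'$ is diffeomorphic to $T^{2n}$ and in particular $b_1(X')=2n$. If ``torus'' in the statement is read as a real torus, this already contradicts the assumption that no finite covering of $X$ is a torus; to obtain a complex torus I continue as follows.

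To upgrade $X'$ I would run the Albanese map $\alpha\colon X'\to\mathrm{Alb}(X')$. Hodge theory on the K\"{a}hler manifold $X'$ gives $h^{1,0}(X')=\tfrac12 b_1(X')=n$, so $\dim_{\CC}\mathrm{Alb}(X')=n=\dim_{\CC}X'$ and $\alpha^*$ is an isomorphism on $H^1$. Because $X'$ is diffeomorphic to a torus, its cohomology ring is the exterior algebra on $H^1$, so $\alpha^*$ is in fact an isomorphism in every degree; in particular it is an isomorphism on $H^{2n}$, giving $\deg\alpha=1$, and the same exterior--algebra structure forbids $\alpha$ from contracting any subvariety (a contracted divisor would enlarge $H^2(X')$ beyond the image of $\alpha^*$). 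Hence $\alpha$ is a biholomorphism, exhibiting $X'$ as a complex torus, which carries a flat K\"{a}hler metric, and the contradiction is complete. The main obstacle is the passage from the scale--invariant curvature--energy bound to a true bound on ${\rm diam}_{g_k}$: everything downstream is an assembly of the theorems of Gromov and Benson--Gordon (plus the Albanese argument), but the uniformity in $k$ of the Schwarz constant $\bar C$ in (\ref{eq0.2}) as the curvature bound shrinks to zero is the delicate point, and it is precisely what Proposition \ref{prop1} is built to deliver.
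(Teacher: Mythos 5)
Your argument is correct and follows essentially the same route as the paper: rescale to $E_g(\tilde g)=1$, use Proposition \ref{prop1} to convert the Schwarz bound $g_k\leq \bar C g$ into a uniform diameter bound, invoke Gromov's almost flat manifold theorem to get a nilmanifold finite cover, and conclude via Benson--Gordon. The closing Albanese step upgrading the diffeomorphic torus to a complex torus is an extra refinement not claimed (or needed) by the paper, which only asserts the cover is a torus in the differentiable sense.
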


{\bf Acknowledgments. }  Some results are  contained    in  the author's PhD thesis \cite{Zh}, and the author would like to  thank his supervisor Professor Fuquan Fang for guidance.

\section{A   Schwarz  type inequality}

The following inequality could be regarded as a generalisation of the  Schwarz   inequality (Theorem 2 in \cite{Yau}) for the case of  small positive curvature.  

 \begin{proposition}\label{prop1}   Let $(X, \omega_g,  g)$ be a compact
K\"{a}hler manifold of complex  dimension $n$.  Then there exist   constants
$\mathcal{E}=\mathcal{E}(g)> 0$ and $C=C(g)>0$ depending only on  the K\"{a}hler   metric $g$,   such that if a
 K\"{a}hler metric  $\tilde{g}$ satisfies   $$ K_{\tilde{g}}^hE_{g}(\tilde{g}) \leq \mathcal{E}, $$
 where $K_{\tilde{g}}^h$ is the holomorphic   bisectional curvature of
$\tilde{g}$, then $$ \tilde{g} \leq CE_{g}(\tilde{g})g.$$
 \end{proposition}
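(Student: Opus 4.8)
The plan is to run a Chern--Lu (Schwarz--Yau) computation for the identity map, viewed as the holomorphic map $\mathrm{id}\colon (X,g)\to (X,\tilde g)$, and then to upgrade the resulting differential inequality to a pointwise bound by a Moser iteration on the \emph{fixed} background manifold $(X,g)$. Set
\begin{equation}
u=\tr{g}{\tilde g}=g^{i\bar j}\tilde g_{i\bar j},
\end{equation}
which up to a dimensional constant equals the energy density $e(\tilde g)$; in particular $u\geq 0$, and by the cohomological invariance of the energy noted in the introduction, $\int_X u\,dv_g=c_0\,E_g(\tilde g)$ for a dimensional constant $c_0$. Since $\tilde g$ is positive definite, its largest eigenvalue relative to $g$ is at most $\tr{g}{\tilde g}$, so a pointwise bound $u\leq C\,E_g(\tilde g)$ immediately yields $\tilde g\leq C\,E_g(\tilde g)\,g$. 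Thus the whole problem reduces to bounding $\sup_X u$ by $C(g)\,E_g(\tilde g)$.

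First I would record the Schwarz--Yau inequality for $u$. Because $\mathrm{id}$ is holomorphic, the standard Chern--Lu computation of $\Delta_g\log u$ produces two curvature contributions: the Ricci curvature of the \emph{domain} $(X,g)$, contracted against $\tilde g$, and the holomorphic bisectional curvature of the \emph{target} $(X,\tilde g)$, contracted twice against the differential. Using a lower Ricci bound $\Ric_g\geq -a\,g$ with $a=a(g)$ (a constant depending only on the fixed metric $g$) for the first term and the hypothesised upper bound $K^h_{\tilde g}$ for the second, one obtains
\begin{equation}
\Delta_g\log u\ \geq\ -\,a(g)\ -\ K^h_{\tilde g}\,u ,
\end{equation}
and hence, expanding $\Delta_g\log u$ and discarding the nonnegative gradient term,
\begin{equation}
\Delta_g u\ \geq\ -\,a(g)\,u\ -\ K^h_{\tilde g}\,u^2 .
\end{equation}
The point of working with the domain $(X,g)$ is that every constant arising from the domain geometry (Ricci bound, Sobolev constant, volume) depends only on $g$, while the only place the variable metric enters with the unfavourable sign is through the coefficient $K^h_{\tilde g}$ of the quadratic term.

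With this subsolution inequality in hand, I would run a De Giorgi--Nash--Moser iteration for $u$ on the fixed Riemannian manifold $(X,g)$ (no cutoff is needed since $X$ is compact). Testing against powers $u^{2p-1}$ and integrating by parts converts the left-hand side into a Dirichlet energy of $u^p$; the linear term is harmless, and the Sobolev inequality of $(X,g)$ controls $\|u^p\|_{L^{2\kappa}}$, $\kappa=\tfrac{n}{n-1}$, in terms of this energy. The delicate term is the quadratic one $K^h_{\tilde g}\int_X u\cdot u^{2p}\,dv_g$: by H\"older it is bounded by $K^h_{\tilde g}\,\|u\|_{L^{n}}\,\|u^p\|_{L^{2\kappa}}^2$, so once the scale-invariant quantity $K^h_{\tilde g}\,\|u\|_{L^{n}}$ is small compared with the Sobolev constant $C_S(g)$, the factor $\|u^p\|_{L^{2\kappa}}^2$ can be absorbed into the left-hand side and the iteration closes, giving $\sup_X u\leq C(g)\,\|u\|_{L^1(dv_g)}=C(g)\,c_0\,E_g(\tilde g)$. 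The role of the threshold $\mathcal E=\mathcal E(g)$ is exactly to guarantee this absorption: since $\int_X u=c_0\,E_g(\tilde g)$ controls the $L^1$ norm, the hypothesis $K^h_{\tilde g}E_g(\tilde g)\leq \mathcal E$ furnishes the required smallness after first promoting the $L^1$ bound to the critical $L^n$ bound.

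I expect the main obstacle to be precisely this absorption step, i.e. controlling the positively-signed quadratic curvature term: unlike the classical Schwarz--Yau setting, where negative target curvature makes this term favourable and a one-line maximum principle suffices, here the term has the wrong sign and must be dominated by the Sobolev constant of $(X,g)$. Making this quantitative --- seeding the iteration by bootstrapping the $L^1$ energy bound to a critical $L^n$ bound, and fixing $\mathcal E(g)$ in terms of $C_S(g)$, $\Vol_g(X)$ and $a(g)$ --- is the heart of the argument; the remaining trace-to-metric conversion $\tilde g\leq u\,g$ is then immediate.
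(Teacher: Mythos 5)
Your reduction to the Chern--Lu differential inequality $\Delta_g u \geq -a(g)\,u - K^h_{\tilde g}\,u^2$ for $u={\rm tr}_g\tilde g$ is exactly the paper's starting point, but the step you defer --- ``promoting the $L^1$ bound to the critical $L^n$ bound'' so that the quadratic term can be absorbed --- is not a quantitative detail to be filled in later; it is the entire content of the proposition, and it cannot be carried out from the scalar inequality alone. For $n\geq 2$ the hypothesis $K^h_{\tilde g}E_g(\tilde g)\leq\mathcal E$ controls only the supercritical quantity $K^h_{\tilde g}\|u\|_{L^1}$, and a nonnegative subsolution of $-\Delta u\leq K u^2$ on a ball in $\mathbb R^{2n}$ with $K\|u\|_{L^1}$ small need not be bounded: profiles concentrating like $|x|^{-2}$ satisfy $\Delta(|x|^{-2})=-2(2n-4)|x|^{-4}$, hence the inequality with $K=2(2n-4)$, lie in $L^1$ but not in $L^n$ or $L^\infty$ once $2n>4$. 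So no De Giorgi--Nash--Moser argument seeded only by the $L^1$ norm can close; some input beyond the differential inequality is required to rule out such concentration.

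The paper supplies that input by exploiting that ${\rm Id}:(X,g)\to(X,\tilde g)$ is a \emph{harmonic map}, not merely that $u$ satisfies a PDE inequality. Its Lemma 2.2 is a quantitative Schoen--Uhlenbeck $\varepsilon$-regularity statement: the smallness hypothesis is the scale-invariant Morrey quantity $r^{2}{\rm Vol}_g(B_r)^{-1}\int_{B_r}e(\tilde g)\leq \varepsilon(g)/\overline K^h$ at the \emph{fixed} scale $r=R(g)$ (which, unlike the $L^n$ norm, \emph{is} controlled by $E_g(\tilde g)$ since $\int_X e(\tilde g)\,dv_g=E_g(\tilde g)$), and the proof runs a point-picking/rescaling argument, applies the mean value inequality to the rescaled subsolution, and --- crucially --- uses Price's monotonicity formula for harmonic maps to transport the energy bound from the fixed scale $R(g)$ down to the blow-up scale $(\overline K^h e_0)^{-1/2}$. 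That monotonicity step is precisely the missing bridge in your plan, and it is unavailable if you retain only the inequality for $u$. (Your scheme does work verbatim when $n=1$, where $L^1$ is already critical; that case is the paper's Lemma 2.3, quoted from McDuff--Salamon.) To repair your proof you would need either to invoke the harmonic-map monotonicity formula, or to find an independent mechanism forcing $\|u\|_{L^n}$ small, and no such mechanism is offered.
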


 Let $(X, \omega_g,  g)$ be a compact
K\"{a}hler manifold.  
Note that the
identity map
$${\rm Id} :(X,g) \rightarrow (X,\tilde{g})$$ is a holomorphic map, and therefore,  is a harmonic map. The energy
density of Id is given by  $$ e(\tilde{g})=\frac{1}{2}
|d({\rm Id})|^{2}=\frac{1}{2}{\rm tr}_{g}\tilde{g} . $$    The Chern-Lu inequality (cf. \cite{Yau,Ch,Lu}) says
 \begin{equation} -\frac{1}{2}\Delta_g e(\tilde{g})\leq r_{c}e(\tilde{g})+ \overline{K}^he(\tilde{g})^{2},
 \end{equation} where     $- r_{c}<0$ is a  lower bound of  the Ricci curvature of $g$, i.e. ${\rm Ric}(g)\geq -r_{c}$,   $\overline{K}^h>0$ is an upper bound of  the holomorphic  bisectional curvature of $\tilde{g}$, $K^h_{\tilde{g}}\leq 
\overline{K}^h$, and $\Delta_g$ is the Laplacian operator of $g$.  
Proposition \ref{prop1} is a consequence of   a quantitative version  of the  Schoen-Uhlenbeck
small energy estimate for harmonic maps  (cf. \cite{Sc} and see also Proposition 2.1 in \cite{Ru}).  

 \begin{lemma}\label{le1}  There exist positive  constants $R(g)$,  $ \varepsilon(g)$
and $\bar{C}(g)$  depending only on the injectivity radius  and
the bound  of  curvature of $g$ such that, for any metric r-ball
$B_{g}(x,r)$ with  $r \leq R(g)$ and $x\in X$,    if
$$ \frac{r^{2}}{{\rm Vol}_{g}(B_{g}(x, r))}\int_{B_{g}(x, r)}e(\tilde{g})dv_{g}\leq
\frac{\varepsilon(g)}{ \overline{K}^h}, $$ then
$$ \sup\limits_{B_{g}(x, \frac{r}{4})}e(\tilde{g}) \leq \bar{C}(g)
\frac{1}{{\rm Vol}_{g}(B_{g}(x, r))}\int_{B_{g}(x, r)}e(\tilde{g})dv_{g}.$$ \end{lemma}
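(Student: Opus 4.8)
The plan is to recognise the statement as an $\varepsilon$-regularity (small energy) estimate for the nonnegative subsolution $e(\tilde g)$ of the semilinear inequality produced by Chern--Lu, and to prove it by the rescaling argument of Schoen. First I would rewrite the Chern--Lu inequality as $\Delta_g e(\tilde g)\geq -2r_c\,e(\tilde g)-2\overline{K}^h e(\tilde g)^2$ and absorb the curvature constant by setting $u=\overline{K}^h e(\tilde g)\geq 0$, so that $\Delta_g u\geq -2r_c u-2u^2$. Under this substitution the hypothesis becomes the scale-invariant smallness $\tfrac{r^2}{\Vol_g(B_g(x,r))}\int_{B_g(x,r)}u\,dv_g\leq\varepsilon(g)$ with no $\overline{K}^h$ remaining, and the desired conclusion is the scale-invariant bound $\sup_{B_g(x,r/4)}u\leq \bar C(g)\,\Vol_g(B_g(x,r))^{-1}\int_{B_g(x,r)}u\,dv_g$. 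Both the hypothesis and the conclusion are homogeneous of degree one in $u$, so the whole statement reduces to an $\varepsilon$-regularity estimate for $u$.

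Next I would run the standard point-selection. For $\sigma\in[0,r]$ set $F(\sigma)=(r-\sigma)^2\sup_{\overline{B_g(x,\sigma)}}u$; this is continuous and attains a maximum at some $\sigma_0$, realised at a point $y_0$ with $u_0:=u(y_0)=\sup_{\overline{B_g(x,\sigma_0)}}u$. Writing $\rho_0=\tfrac12(r-\sigma_0)$, maximality of $F$ gives $\sup_{B_g(y_0,\rho_0)}u\leq 4u_0$. I would then rescale the metric, $\tilde g=u_0 g$, and put $\check u=u_0^{-1}u$, so that $\check u(y_0)=1$, $\check u\leq 4$ on the $\tilde g$-ball of radius $\sqrt{u_0}\,\rho_0$, and $\Delta_{\tilde g}\check u=u_0^{-2}\Delta_g u\geq -2r_c u_0^{-1}\check u-2\check u^2\geq -(2r_c u_0^{-1}+8)\check u$, the last step using $\check u\leq 4$. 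In the regime $u_0\rho_0^2\geq 1$ one has $u_0\geq\rho_0^{-2}\geq R(g)^{-2}$, so the zeroth-order coefficient is bounded, while the sectional curvature of $\tilde g$ is $u_0^{-1}$ times that of $g$ and the injectivity radius of $\tilde g$ is at least one on the relevant ball; thus $\check u$ is a bounded subsolution of a uniformly elliptic equation on a unit ball of uniformly controlled geometry, with Sobolev constant depending only on the curvature bound and injectivity radius of $g$.

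On this unit ball the De Giorgi--Nash--Moser mean value inequality yields $1=\check u(y_0)\leq C_1\int_{\tilde B(y_0,1)}\check u\,dv_{\tilde g}$, and unwinding the scaling shows that the right-hand side equals the scale-invariant energy $\rho^{2-m}\int_{B_g(y_0,\rho)}u\,dv_g$ of $u$ at the small radius $\rho=u_0^{-1/2}\leq\rho_0$, with $m=2n$. To close the argument I would invoke the almost-monotonicity of the energy density of the harmonic map $\mathrm{Id}\colon(X,g)\to(X,\tilde g)$: Price's formula, valid for radii $\leq R(g)$ up to a multiplicative curvature error, bounds this small-scale scale-invariant energy by $C(g)$ times the scale-invariant energy on $B_g(x,r)$, hence by $C(g)\varepsilon$. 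Choosing $\varepsilon=\varepsilon(g)$ so small that $C_1 C(g)\varepsilon<1$ contradicts $\check u(y_0)=1$, forcing $u_0\rho_0^2$ to be bounded; the same mean value inequality applied at the concentration scale, together with the monotonicity, then converts this into the asserted bound of $\sup_{B_g(x,r/4)}u$ by the average of $u$ over $B_g(x,r)$.

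The main obstacle is the quadratic term $u^2$: under the natural rescaling of an energy density it is scale-invariant, so neither a direct Moser iteration (where the nonlinear term cannot be absorbed uniformly along the iteration) nor a bare maximum principle closes. The device that overcomes this is precisely the blow-up rescaling $\tilde g=u_0 g$, which normalises the coefficient of the nonlinearity to a bounded constant, combined with the small-energy hypothesis and the almost-monotonicity of the harmonic-map energy density to exclude concentration. I expect the delicate technical points to be the control of the curvature error terms in the monotonicity formula at scales $\leq R(g)$ and the comparison of scale-invariant energies on balls centred at the moving point $y_0$ rather than at $x$, both of which dictate how small $R(g)$ and $\varepsilon(g)$ must be chosen in terms of the injectivity radius and curvature bound of $g$.
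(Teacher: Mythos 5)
Your proposal is correct and follows essentially the same route as the paper: the Schoen point-selection, the blow-up rescaling $u_0\,g$ to normalise the quadratic term of the Chern--Lu inequality, the De Giorgi--Nash--Moser mean value inequality on the rescaled ball, and Price's monotonicity formula to transfer the scale-invariant energy back to $B_g(x,r)$, with the two regimes $u_0\rho_0^2\geq 1$ and $u_0\rho_0^2<1$ handled exactly as in the paper's cases $r_0\geq 1$ and $r_0<1$. Your substitution $u=\overline{K}^h e(\tilde g)$ is only a cosmetic repackaging of the paper's explicit tracking of $\overline{K}^h$ through the rescaling $\bar g=\overline{K}^h e_0\, g$.
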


 \begin{proof}  The only difference between this lemma and  the well-understood  Schoen-Uhlenbeck
 estimate in  \cite{Sc} is that the constant $ \overline{K}^h$ in the Chern-Lu inequality   enters explicitly into  the formula of the small energy condition. To  prove this lemma,   we need only  to track where $ \overline{K}^h$ goes in  the original  proof  in    \cite{Sc}.  Here  we also consult the proof of   Theorem  2.2.1 in \cite{Tian}.   We present the details for readers' convenience.

  Firstly, there exist constants  $R(g)$ and $\Lambda$
depending only on the injectivity radius  and the bound
of  curvature of $g$ such that, on any metric $R(g)$-ball
$B_g(x, R(g))$, there is a harmonic coordinate system $\{x^1, \cdots, x^{2n}\}$ on
$B_g(x, R(g))$, i.e.  $\Delta_g x^i=0 $, satisfying $x=(0, \cdots, 0)$, 
$$\Lambda^{-1}(\delta_{ij})\leq (g_{ij})\leq \Lambda (\delta_{ij}), \ \ \ {\rm and}\ \ \   \|g_{ij}\|_{C^{1,
\frac{1}{2}}}< \Lambda,
$$ where $g_{ij}=g( \frac{\partial}{\partial x^{i}},  \frac{\partial}{\partial
x^{j}})$ (cf. Section 1  in \cite{GW}).  

 Note that there exists a  $\sigma_{0}\in [0,\frac{r}{2})$, $ r\leq R(g)$,  such
that $$ (r-2\sigma_{0})^{2}\sup\limits_{B_g(x, 
\sigma_{0})}e(\tilde{g})=
\max\limits_{0\leq\sigma \leq\frac{ r}{2}}(r-2\sigma)^{2}\sup\limits_{B_g(x, 
\sigma)}e(\tilde{g}).$$ Moreover, there exists a point
$x_{0}\in B_g(x, \sigma_{0}) $ such that $$
e_{0}=e(\tilde{g})(x_{0})=\sup\limits_{B_g(x,  \sigma_{0})}e(\tilde{g}).$$ 
If we let $\rho_{0}= \frac{1}{4} (r- 2 \sigma_{0})$, then  $B_g(x_0, \rho_{0}) \subset B_g(x, \sigma_0+\rho_0)\subset B_g(x, \frac{r}{2})$. 
We obtain 
$$\sup\limits_{B_g(x_0, \rho_{0})}e(\tilde{g})\leq \sup\limits_{B_g(x, 
\sigma_{0}+\rho_{0})}e(\tilde{g})\leq  \frac{(r-2\sigma_0)^2}{(r-2\sigma_0-2\rho_0)^2} e_0= 4e_{0}.$$

 Now we assume $
\overline{K}^he_{0}>1$. Consider the re-scaled metric $
\bar{g}=\overline{K}^he_{0}g$, and the metric ball $
B_{\bar{g}}(x_0, r_{0})$ of $\bar{g}$, where
$r_{0}=(\overline{K}^he_{0}) ^{ \frac{1}{2}}\rho_{0}$. Then the energy
density of the identity map with respect to  the rescaled  metric $\bar{g}$
reads $$ \bar{e}(\tilde{g})=
\frac{1}{\overline{K}^he_{0}}e(\tilde{g}).$$ Therefore $
\bar{e}(\tilde{g})(x_{0})=\bar{e}_0= \frac{1}{
\overline{K}^h}$, and we obtain  $$
\sup\limits_{B_{\bar{g}}  (x_0, r_{0})}\bar{e}(\tilde{g})\leq 4 \bar{e}_0=
\frac{4}{ \overline{K}^h}.$$  Since $ \bar{r_{c}}=
\frac{r_{c}}{\overline{K}^he_{0}}<r_{c} $, the  Chern-Lu inequality (\ref{eq3.1})  says
$$
-\frac{1}{2}\Delta_{\bar{g}}\bar{e}(\tilde{g})  \leq
\bar{r_{c}} \bar{e}(\tilde{g})+ \overline{K}^h
\bar{e}(\tilde{g})^{2} \leq  ( r_{c}+4) \bar{e}(\tilde{g})
$$ on $ B_{\bar{g}}  (x_0, r_{0})$, where $\bar{g}=\sum g_{ij}dy^idy^j $, 
$$\Delta_{\bar{g}}=\sum g^{ij}\frac{\partial ^{2}}{\partial y^{i}
\partial y^{j}}+ \frac{1}{ \sqrt{{\rm det}(g_{ij})}}
\frac{\partial}{\partial y^{i}}({\rm det}(g_{ij})g^{ij}
)\frac{\partial}{\partial y^{j}},$$ and $y^{i}=(\overline{K}^he_{0}) ^{
\frac{1}{2}}x^{i}$.

If $r_{0}\geq 1$, i.e. $\rho_{0}\geq ( \overline{K}^he_{0}) ^{-
\frac{1}{2}}$, then by the mean value inequality (Theorem 9.20 in
\cite{GT}) we obtain $$
\frac{1}{\overline{K}^h}=\bar{e}_{0}\leq C_{1}( r_{c}+5)
\int_{ B_{\bar{g}}(x_0,1)} \bar{e}(\tilde{g})dv_{
\bar{g}}$$ for a constant  $C_{1}$  depending
only on $\Lambda(g)$ and $n$.  Note that $$( \overline{K}^he_{0}) ^{-
\frac{1}{2}} \leq \rho_{0}=\frac{1}{4}(r-2\sigma_0) \leq \frac{r}{2}.$$  
 By  the monotonicity
inequality for harmonic maps (cf. Theorem 1" (a) in  \cite{Price}),
\begin{eqnarray*} \int_{ B_{\bar{g}}(x_0,1)} \bar{e}(\tilde{g})dv_{
\bar{g}} & = & (\overline{K}^he_{0}) ^{n-1}
\int_{B_{g}(x_{0}, (\overline{K}^h e_{0})^{-\frac{1}{2}})}e(\tilde{g})
dv_{g} \\ & \leq & \frac{C_2 r^{2}}{{\rm Vol}_{g}(B_{g}(x_0,\frac{r}{2}))}\int_{B_{g}(x_0,\frac{r}{2})}e(\tilde{g})
dv_{g}
\end{eqnarray*} for a 
constant $C_{2}$ depending only on $\Lambda(g)$ and $n$. By $\sigma_0<\frac{r}{2}$, $B_g(x_0, \frac{r}{2})\subset B_g(x,r)$,  $${\rm Vol}_g(B_g(x, r))\leq \kappa' r^{2n} \leq\kappa {\rm Vol}_g(B_g(x_0, \frac{r}{2})), $$ for constants $\kappa$ and $\kappa'$ depending only on  $\Lambda(g)$ and $n$. Thus 
\begin{eqnarray*} \frac{1}{\overline{K}^h} & \leq  &  C_{1}C_{2}(r_{c}+5) \frac{ r^{2}}{{\rm Vol}_{g}(B_{g}(x_0,\frac{r}{2}))}\int_{B_{g}(x_0,\frac{r}{2})}e(\tilde{g})
dv_{g}  \\ & \leq &
\kappa C_{1}C_{2}(r_{c}+5)\frac{r^{2}}{{\rm Vol}_{g}(B_{g}(x,r))}\int_{B_{g}(x,r)}e(\tilde{g})
dv_{g}\\  & \leq &\kappa C_{1}C_{2}(r_{c}+5)  \frac{\varepsilon(g)}{\overline{K}^h}.\end{eqnarray*} If  we choose $ \varepsilon(g)=
\frac{1}{2\kappa C_{1}C_{2}(r_{c}+5)}$, then it is a contradiction.  

Therefore we assume $r_{0}<1$. By the mean value inequality (Theorem
9.20 in [37]), we have
\begin{eqnarray*} 
\frac{1}{\overline{K}^h}=\bar{e}_{0}  & \leq & C_{1}( r_{c}+5)
r_{0}^{-2n}\int_{B_{\bar{g}}(x_0, r_0)} \bar{e}(\tilde{g}) dv_{
\bar{g}} \\ &= & C_{1}( r_{c}+5) r_{0}^{-2}
\rho_{0}^{2-2n}\int_{B_{g}(x_{0}, \rho_{0})}e(\tilde{g}) dv_{g}. 
\end{eqnarray*}  Thus
\begin{eqnarray*} 
\rho_{0}^{2}e_{0}= \frac{r_{0}^{2}}{ \overline{K}^h} & \leq & C_{1}(
r_{c}+5)  \rho_{0}^{2-2n}\int_{B_{g}(x_{0}, \rho_{0})}e(\tilde{g}) dv_{g}
\\ &\leq &
C_{1}C_{2}(r_{c}+5)\frac{ r^{2}}{{\rm Vol}_{g}(B_{g}(x_0,\frac{r}{2}))}\int_{B_{g}(x_0,\frac{r}{2})}e(\tilde{g})
dv_{g}
\end{eqnarray*}  by  the monotonicity
inequality for  harmonic maps again and $\rho_0\leq \frac{r}{2}$. Hence  $$
\max\limits_{0\leq\sigma \leq\frac{ r}{2}} (r-2\sigma)
^{2}\sup\limits_{B_{\sigma}}e(\tilde{g}) \leq 16 \rho_{0}^{2}e_{0} \leq 
C_{3}\frac{r^{2}}{{\rm Vol}_{g}(B_{g}(x,r))}\int_{B_{g}(x,r)}e(\tilde{g})
dv_{g} $$  by the same argument as above. 
We take   $\sigma= \frac{1}{4}r$ and obtain
the estimate.

If $ \overline{K}^he_{0}\leq 1$, by the Chern-Lu inequality (\ref{eq3.1}) we obtain
$$-\frac{1}{2} \Delta_g e(\tilde{g})  \leq  ( r_{c}+4) e(\tilde{g})$$ on
$B_{g}(x_{0}, \rho_{0})$. Then  the   similar arguments as above 
prove  the estimate.
 \end{proof}

  In the two dimensional case, there is a    result about the explicit values of the  constants $\varepsilon$ and $\bar{C}$.  

 \begin{lemma}[Lemma 4.3.2 in \cite{MS}]\label{le-surface} If $e:
  \mathbb{R}^{2}\supset B_{h_0}(0, r)\rightarrow \mathbb{R}$ is a
  function satisfying $$ -\triangle e \leq A e^{2}, \ \ \ e>0, \ \ \  and  \ \ \ \int_{ B_{h_0}(0, r)}e dx\leq \frac{\pi
  }{12A }, $$  for  $A>0$, then  $$ e(0) \leq \frac{8}{\pi r^{2}} \int_{ B_{h_0}(0, r)}e dx $$ where $h_0$ denotes the standard Euclidean metric and $\triangle$ is the Laplacian operator with respect to  $h_0$. 
\end{lemma}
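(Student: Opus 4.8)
The plan is to combine a Heinz-type rescaling argument with the sub-mean-value property of subharmonic functions: the point is to pass to a small ball on which $e$ is comparable to its value at a cleverly chosen point, so that the nonlinear inequality $-\triangle e \leq Ae^2$ becomes a \emph{linear} inequality $-\triangle e \leq (\text{const})\,e$, and then to use the smallness of the energy to control the resulting constants. Throughout I may assume $e(0)>0$, since otherwise there is nothing to prove.

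First I would introduce the auxiliary function $f(x)=(r-|x|)^2 e(x)$ on $\overline{B_{h_0}(0,r)}$. It is continuous, nonnegative, vanishes on the boundary sphere and satisfies $f(0)=r^2 e(0)>0$, so it attains its maximum at an interior point $x_0$ with $|x_0|<r$. Setting $c=e(x_0)$ and $\delta=\tfrac12(r-|x_0|)>0$, the maximality of $f$ gives, by a direct comparison of $(r-|x|)^2$ with $(r-|x_0|)^2=(2\delta)^2$ on $B_{h_0}(x_0,\delta)$, the inclusion $B_{h_0}(x_0,\delta)\subset B_{h_0}(0,r)$, the bound $\sup_{B_{h_0}(x_0,\delta)} e \leq 4c$, and the identity $\max f = 4\delta^2 c$. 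On this ball the estimate $e\leq 4c$ linearizes the hypothesis, and after rescaling $\tilde e(y)=e(x_0+\delta y)$ on $B_{h_0}(0,1)$ I obtain $-\triangle \tilde e \leq b\,\tilde e$ with $b=4Ac\delta^2 = A\max f$ and $\sup_{B_{h_0}(0,1)}\tilde e \leq 4c$. Hence $\triangle\tilde e \geq -4bc$, so $u=\tilde e + bc\,|y|^2$ is subharmonic; the sub-mean-value inequality together with $\int_{B_{h_0}(0,1)}|y|^2\,dy = \tfrac{\pi}{2}$ and $u(0)=c$ yields
\[ c \leq \frac{1}{\pi}\int_{B_{h_0}(0,1)} u = \frac1\pi\int_{B_{h_0}(0,1)}\tilde e + \frac{bc}{2}, \]
and therefore, rescaling back, $\delta^2 c \leq \dfrac{2}{\pi(2-b)}\displaystyle\int_{B_{h_0}(0,r)} e$, valid whenever $b<2$.

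To close the estimate I would run a bootstrap on $b$. Feeding the smallness hypothesis $\int e \leq \pi/(12A)$ into the last inequality and using $b=4A\delta^2 c$ gives $b(2-b)\leq \tfrac23$, which confines $b$ to $[0,\,1-\tfrac{1}{\sqrt3}]$ or $[1+\tfrac{1}{\sqrt3},\,2)$. The main obstacle is excluding the large branch, and I would resolve it by a continuity (connectedness) argument: repeating the entire construction on $B_{h_0}(0,R)$ produces a quantity $b(R)=A\max_{\overline{B_{h_0}(0,R)}}[(R-|x|)^2 e(x)]$ that depends continuously on $R$ with $b(0^+)=0$, and the inequality $b(R)(2-b(R))\leq\tfrac23$ holds wherever $b(R)<2$. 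Since $b(0^+)=0$ lies in the small branch and $b$ cannot jump across the forbidden gap $(1-\tfrac{1}{\sqrt3},\,1+\tfrac{1}{\sqrt3})$, the connected component containing small $R$ keeps $b(R)\leq 1-\tfrac{1}{\sqrt3}$, which in particular bounds $b$ away from $2$ and forces this component to exhaust $(0,r]$; hence $b=b(r)\leq 1$.

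Finally, with $b\leq 1$ one has $2-b\geq 1$, so combining $\tfrac14 r^2 e(0) = \tfrac14 f(0) \leq \tfrac14 \max f = \delta^2 c$ with the second-step estimate gives
\[ \tfrac14\, r^2 e(0) \leq \delta^2 c \leq \frac{2}{\pi(2-b)}\int_{B_{h_0}(0,r)} e \leq \frac{2}{\pi}\int_{B_{h_0}(0,r)} e, \]
which is exactly $e(0)\leq \dfrac{8}{\pi r^2}\displaystyle\int_{B_{h_0}(0,r)} e$. I expect the only genuinely delicate point to be the continuity argument pinning down the small branch; everything else is the two elementary comparisons (the $\max$-of-$f$ trick and the subharmonic mean value), with the threshold $\pi/(12A)$ and the sharp constant $8/\pi$ emerging automatically from the bookkeeping.
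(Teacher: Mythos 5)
The paper offers no proof of this lemma at all---it is quoted verbatim as Lemma 4.3.2 of McDuff--Salamon \cite{MS}---so there is nothing internal to compare against; what you have written is a correct, self-contained proof, and it follows the same Heinz-trick skeleton as the proof in \cite{MS} (weight $e$ by the squared distance to the boundary, locate the maximizer $x_0$, observe $e\leq 4c$ on $B_{h_0}(x_0,\delta)$ to linearize $-\triangle e\leq Ae^2$, then apply a mean value inequality). The one place you genuinely diverge from the source is the exclusion of the large branch $b\geq 1+\tfrac{1}{\sqrt{3}}$: McDuff--Salamon handle this by a dichotomy, shrinking the ball to radius $\rho\sim (Ac)^{-1/2}<\delta$ when $Ac\delta^2$ is large and deriving $\int e\geq \tfrac{\pi}{8A}$ directly, whereas you run a connectedness argument on $R\mapsto b(R)=A\max_{\overline{B_{h_0}(0,R)}}(R-|x|)^2e$, using $b(0^+)=0$ and the forbidden interval $(1-\tfrac{1}{\sqrt{3}},\,1+\tfrac{1}{\sqrt{3}})$ to pin $b$ to the small branch. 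Your variant is valid (the intermediate value theorem does prevent $b$ from reaching $2$ without entering the gap, and the energy hypothesis holds on every $B_{h_0}(0,R)\subset B_{h_0}(0,r)$), and it has the merit of showing exactly why the threshold must be strictly below $\pi/(8A)$: at $\pi/(8A)$ the constraint $b(2-b)\leq 1$ is vacuous and the gap closes, while $\pi/(12A)$ gives $b(2-b)\leq \tfrac{2}{3}$ and a nonempty gap. Two small housekeeping points: the maximum of $f(x)=(r-|x|)^2e(x)$ is only guaranteed to be attained if $e$ extends continuously to $\overline{B_{h_0}(0,r)}$; for $e$ defined merely on the open ball one should run your argument on $B_{h_0}(0,R)$ for $R<r$ and let $R\to r^-$ at the end (your parametrization by $R$ already does essentially this). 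Also, the intended application in Lemma 3.1 takes $r\to\infty$ on $\mathbb{R}^2$, for which your statement and proof work without change.
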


\begin{proof}[Proof of Proposition \ref{prop1}]Let $R(g)$ and $\varepsilon(g)$ be
 the constants  appeared  in Lemma \ref{le1}, and $ v(g)=\inf\limits_{x\in
 X}{\rm Vol}_{g}(B_{g}(x, R(g)))$, where $B_{g}(x, R(g))$ is a metric $R(g)$-ball.
Set $$ \mathcal{E} (g)= \frac{
\varepsilon(g) v(g)}{(R(g)) ^{2}} .$$

Assume that there exists a
 K\"{a}hler metric  $\tilde{g}$ such that $$   \sup_X K_{\tilde{g}}^hE_{g}(\tilde{g}) \leq \overline{K}^hE_{g}(\tilde{g}) \leq \mathcal{E},$$ for a constant  $\overline{K}^h>0$.   
Then for any metric $R(g)$-ball $B_g(x, R(g))$, we have
$$\frac{R(g)^{2}}{{\rm Vol}_{g}(B_{g}(x, R(g)))}\int_{B_{g}(x, R(g))}e(\tilde{g})dv_{g} \leq
\frac{R(g) ^{2}E_{g}(\tilde{g})}{v(g)} = \frac{\varepsilon(
g)E_{g}(\tilde{g})}{\mathcal{E} (g)}\leq\frac{\varepsilon(
g)}{\overline{K}^h}.$$ Since   the  Chern-Lu inequality (\ref{eq3.1}) holds,    Lemma
 \ref{le1} implies 
\begin{eqnarray*} \sup\limits_{B_{g}(x, \frac{R(g)}{4})}e(\tilde{g}) & \leq  & C(g)
\frac{1}{{\rm Vol}_{g}(B_{g}(x, R(g)))}\int_{B_{g}(x, R(g))}e(\tilde{g})dv_{g}
\\ & \leq & \frac{C(g)E_{g}(\tilde{g})}{v(g)} \\&
=& \overline{C}E_{g}(\tilde{g}),
\end{eqnarray*} where $\overline{C}$ depends only on the injectivity
radius  and the bound  of the curvature of $g$.  Therefore 
$$ \tilde{g}\leq \overline{C}E_{g}(\tilde{g})g $$ on $X$.  \end{proof}

There is an  application of Proposition \ref{prop1} to the Gromov-Hausdorff convergence of K\"{a}hler manifolds via Ruan's work \cite{Ru}.

 \begin{corollary}\label{co2}     Let $(X, \omega_g,  g)$ be a compact
K\"{a}hler manifold of complex dimension $n$. Assume that there is  a sequence of K\"{a}hler metrics $g_k$ with bounded Riemannian  curvature $$|K_{g_k}|\leq 1, \ \  \  \  and \  \ \ \ 0< \tau \leq 2 E_{g}(g_k) \leq \mathcal{E}, $$ where $\mathcal{E}$ is the constant in 
Proposition \ref{prop1}, $\tau$ is a constant independent of $k$,  and if $n=1$, $\mathcal{E}=\frac{\pi}{12}$.  Then the following holds: 
\begin{itemize}
\item[(i)] If the volume $${\rm Vol}_{g_k}(X)\geq v,$$ for a constant $v>0$ independent of $k$, i.e. the non-collapsing case, then a subsequence of  $(X, g_k)$ converges to a compact  $C^{1,\alpha}$-K\"{a}hler manifold $(Y, g_\infty)$ of the same dimension in the $C^{1,\alpha}$ Cheeger-Gromov sense. Furthermore, $X$ is biholomorphic to $Y$.   
\item[(ii)] If $${\rm Vol}_{g_k}(X)\rightarrow 0, \  \  \  when \  \  k\rightarrow\infty,$$ i.e. the   collapsing case, then   $X$ admits a nontrivial holomorphic foliation, i.e. $X$ is not the leaf. 
\end{itemize}
 \end{corollary}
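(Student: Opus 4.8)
The plan is to use the Schwarz estimate of Proposition \ref{prop1} to reduce both assertions to the standard convergence and collapsing theory for Riemannian manifolds with two-sided sectional curvature bounds, in the refined form for K\"{a}hler metrics established by Ruan \cite{Ru}. First I would convert the Riemannian bound $|K_{g_k}|\leq 1$ into the holomorphic bisectional curvature bound demanded by Proposition \ref{prop1}. On a K\"{a}hler manifold the holomorphic bisectional curvature in the directions of orthonormal vectors $X,Y$ equals $K(X\wedge Y)+K(X\wedge JY)$, a sum of two Riemannian sectional curvatures, so $|K_{g_k}|\leq 1$ forces $K^h_{g_k}\leq 2$. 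Together with the cohomological hypothesis $2E_g(g_k)\leq \mathcal{E}$ this gives
$$K^h_{g_k}E_g(g_k)\leq 2E_g(g_k)\leq \mathcal{E},$$
so Proposition \ref{prop1} applies and yields $g_k\leq C E_g(g_k)\,g\leq \tfrac{C\mathcal{E}}{2}\,g=:\bar C g$ uniformly in $k$ (in the case $n=1$ one uses the explicit constant of Lemma \ref{le-surface}). Because $\bar C g$ is a fixed metric on the compact manifold $X$, comparing lengths and volume forms immediately produces a uniform diameter bound ${\rm diam}_{g_k}(X)\leq \sqrt{\bar C}\,{\rm diam}_g(X)=:D$ and a uniform two-sided volume control $\Vol_{g_k}(X)\leq \bar C^{\,n}\Vol_g(X)=:V$.

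With these bounds in hand I would treat case (i) first. The data $|K_{g_k}|\leq 1$, ${\rm diam}_{g_k}(X)\leq D$, and the non-collapsing hypothesis $\Vol_{g_k}(X)\geq v$ fall under Cheeger's injectivity radius estimate, which supplies a uniform lower bound $\mathrm{inj}_{g_k}\geq i_0>0$. The Cheeger--Gromov--Anderson compactness theorem then gives a subsequence converging in the $C^{1,\alpha}$ Cheeger--Gromov sense to a $C^{1,\alpha}$ Riemannian manifold $(Y,g_\infty)$ of the same dimension; the regularity is only $C^{1,\alpha}$ because two-sided sectional curvature bounds control the metric in harmonic coordinates but not its derivatives. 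It then remains to promote this to a biholomorphism. Since every $g_k$ is K\"{a}hler for the single fixed complex structure $J$ on $X$, the complex structures pull back under the convergence diffeomorphisms and converge, together with the metrics, to an integrable $J_\infty$ making $(Y,g_\infty,J_\infty)$ a $C^{1,\alpha}$-K\"{a}hler manifold, with the diffeomorphisms becoming asymptotically holomorphic; this is precisely where I would invoke Ruan's refinement \cite{Ru} to conclude that $X$ is biholomorphic to $Y$.

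For case (ii), the hypothesis $\Vol_{g_k}(X)\to 0$ together with $|K_{g_k}|\leq 1$ places us in the Cheeger--Fukaya--Gromov theory of collapse with bounded curvature, which endows $X$ with an F-structure of positive rank. Here the lower energy bound is essential: since $E_g(g_k)$ depends only on the classes $[\omega_{g_k}],[\omega_g]$, the inequality $2E_g(g_k)\geq \tau>0$ prevents the K\"{a}hler classes from collapsing in the direction paired with $[\omega_g]^{n-1}$, so the collapse cannot be total and the collapsed directions integrate to a foliation of positive codimension. I would then again appeal to \cite{Ru}, where the F-structure produced by bounded-curvature collapse of K\"{a}hler metrics is shown to be a nontrivial holomorphic foliation, so that $X$ is not a single leaf.

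The main obstacle in both parts is not the Riemannian input, which is classical once the uniform bounds are established, but the upgrade from the smooth picture to the holomorphic one: identifying the limiting integrable complex structure and proving the biholomorphism in (i), and establishing the holomorphicity and nontriviality of the collapsing foliation in (ii). I expect to defer exactly these two steps to the corresponding statements in \cite{Ru}, the contribution of the present argument being the verification --- through Proposition \ref{prop1} --- that the single hypothesis $K^h_{g_k}E_g(g_k)\leq \mathcal{E}$ delivers all of the metric bounds (uniform upper bound, diameter, and volume control) on which those convergence and collapsing theorems rely.
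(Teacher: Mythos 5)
Your opening moves match the paper exactly: convert $|K_{g_k}|\leq 1$ into $K^h_{g_k}\leq 2$ via the decomposition of holomorphic bisectional curvature into two sectional curvatures, combine with $2E_g(g_k)\leq\mathcal{E}$ to invoke Proposition \ref{prop1}, and conclude $g_k\leq \bar{C}g$ uniformly. But from there you take a Riemannian detour that does not actually connect the Schwarz bound to the conclusions. The paper's point is different and sharper: Ruan's Theorems 1.2 and 1.3 in \cite{Ru} already apply to any sequence of K\"ahler metrics with bounded curvature on the fixed complex manifold $X$, but a priori they only give convergence (or a foliation) away from a \emph{blow-up subvariety}, defined in Ruan's Proposition 3.1 by the points where the Lelong number of the weak limit current $\omega_\infty$ of $\omega_{g_k}$ is positive. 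The sole role of $g_k\leq \bar{C}g$ in this corollary is to force all those Lelong numbers to vanish, so that the blow-up subvariety is empty and Ruan's conclusions hold globally on $X$ --- in particular yielding the biholomorphism in (i) and the foliation on all of $X$ in (ii). Your proposal never draws this link; the consequences you do extract from $g_k\leq\bar{C}g$ (a diameter bound and a volume upper bound) are not the hypotheses Ruan's theorems need, so the step ``this is precisely where I would invoke Ruan's refinement'' is citing a theorem whose applicability you have not verified.

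The substitute mechanisms you propose also do not deliver the holomorphic statements. In (i), Cheeger--Gromov--Anderson compactness gives a $C^{1,\alpha}$ Riemannian limit diffeomorphic to $X$, but complex structures can jump under such limits, so the biholomorphism $X\cong Y$ is exactly the non-Riemannian content that must come from the emptiness of the blow-up set, not from an appeal to a generic ``refinement.'' In (ii), the Cheeger--Fukaya--Gromov F-structure is a smooth local torus structure and is not what Ruan uses: his foliation comes from the kernel of the nonzero positive current $\omega_\infty$, which satisfies $\omega_\infty^n\equiv 0$ in the collapsing case; the lower bound $\tau$ enters only to guarantee $\int_X\omega_\infty\wedge\omega_g^{n-1}>0$, hence $\omega_\infty\neq 0$ and the foliation is nontrivial. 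Your intuition about the role of $\tau$ is essentially right, but the route through F-structures mischaracterizes the reference you are deferring to. To repair the proof, replace the diameter/volume discussion with the observation that a uniform upper bound $g_k\leq\bar{C}g$ makes every Lelong number of $\omega_\infty$ zero, and then quote Theorems 1.2 and 1.3 of \cite{Ru} directly.
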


\begin{proof}  Note that the bound of Riemannian sectional curvature implies the holomorphic  bisectional curvature $K_{g_k}^h\leq 2$.   Since  Proposition \ref{prop1} implies  $g_k\leq Cg$,  the blow-up subvariety in Proposition 3.1 of \cite{Ru}, defined by the non-trivialness of the Lelong number of the limit current of $g_k$, is empty. Therefore the case of non-collapsing  follows from Theorem 1.2 in  \cite{Ru}.     In the  collapsing case,  the K\"{a}hler forms $\omega_{g_k}$ converges to a non-zero  current $\omega_\infty$ in the distribution sense, and $\omega_\infty^n\equiv 0$ by  Theorem 1.2 in  \cite{Ru}. Furthermore,  Theorem 1.3 of  \cite{Ru}  shows that $\omega_\infty$ induces a nontrivial  holomorphic foliation on $X$. 
\end{proof}

 \section{Proofs }

To prove  Theorem \ref{main}, we need   a  quantitative  version  of Theorem 3.3
in \cite{SU}.  

 \begin{lemma}\label{le-gap} If $u$ is a non-trivial harmonic map from
$(S^{2},h_{1})$ to $(X,  g)$, then $$ 
\int_{S^{2}}e(u)dv_{h_{1}} \geq \frac{\pi}{24
\overline{K}},  \ \  \   \  \  \   \   \   e(u)=|du|^2={\rm tr}_{h_{1}}(u^{*} g),$$  where $h_{1}$ is the metric of Gaussian curvature 
one,  and
$\overline{K}$ is a positive  upper bound of the Riemannian  sectional curvature
of $g$. \end{lemma}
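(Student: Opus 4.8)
The plan is to establish a universal lower bound on the energy of any nontrivial harmonic map from $(S^2, h_1)$ into $(X, g)$, where the bound is governed by the curvature upper bound $\overline{K}$. The essential geometric input is the following classical fact: a harmonic map from a surface with values in a Riemannian manifold of sectional curvature at most $\overline{K}$ satisfies a Bochner-type differential inequality for its energy density. Specifically, writing $e(u) = |du|^2$, one has a pointwise estimate of the form
\begin{equation}\label{eq-boch}
-\Delta_{h_1} e(u) \leq 2\overline{K}\, e(u)^2 + (\text{lower order terms involving the curvature of } h_1),
\end{equation}
where the quadratic term $2\overline{K}\, e(u)^2$ arises from the curvature of the target pulled back through $du$, and the lower-order contribution comes from the fixed Gaussian curvature of $(S^2,h_1)$. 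First I would derive or cite this inequality, being careful to identify the precise constant in front of the quadratic term, since that constant propagates directly into the final numerical bound $\frac{\pi}{24\overline{K}}$.

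The strategy is then a contradiction argument of small-energy-forces-triviality type, exactly parallel to the Schoen--Uhlenbeck mechanism used in Lemma \ref{le1} but now carried out on the source surface. Suppose for contradiction that the total energy $\int_{S^2} e(u)\, dv_{h_1}$ is strictly smaller than $\frac{\pi}{24\overline{K}}$. By covering $S^2$ by small geodesic balls (or by passing to a local conformal coordinate chart where $h_1$ becomes a conformal multiple of the flat metric), the local energy on each ball is then a fortiori below the threshold $\frac{\pi}{12A}$ appearing in Lemma \ref{le-surface}, with $A = 2\overline{K}$ playing the role of the constant in the quadratic differential inequality. Applying Lemma \ref{le-surface} to the energy density $e(u)$ on each such chart yields a pointwise sup bound on $e(u)$ controlled by its local integral. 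The hard part will be the bookkeeping between the round metric $h_1$ and the flat metric $h_0$ of Lemma \ref{le-surface}: one must verify that the conformal change does not destroy the differential inequality (conformal invariance of the harmonic map energy in two dimensions is what makes this work), and that the threshold constant $\frac{\pi}{12A}$ transfers correctly, so that the factor $\frac{1}{12}$ combined with the factor $2$ in $A = 2\overline{K}$ produces the stated denominator $24$.

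Once the sup bound holds everywhere on the compact surface $S^2$, I would conclude that $e(u)$ is small in $C^0$ uniformly, and then run a standard removable-singularity or $\varepsilon$-regularity bootstrap to upgrade to the conclusion that $du \equiv 0$, i.e. $u$ is constant, contradicting the assumed nontriviality. The key steps, in order, are: (1) establish the Bochner inequality \eqref{eq-boch} with the sharp quadratic constant $2\overline{K}$; (2) reduce locally to the flat model via conformal invariance of the two-dimensional energy; (3) invoke Lemma \ref{le-surface} with $A = 2\overline{K}$ to obtain the pointwise estimate under the smallness hypothesis; (4) derive the contradiction with nontriviality. I expect step (1)—pinning down the exact constant in the differential inequality so that the numerology yields precisely $\frac{\pi}{24\overline{K}}$ rather than merely some constant multiple of $\frac{1}{\overline{K}}$—to be the main technical obstacle, since everything downstream depends on that constant being correct.
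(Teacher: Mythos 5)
Your identification of the constant is correct and matches the paper: the Bochner formula, the sectional curvature upper bound, and the (automatic) conformality of a harmonic map from $S^2$ give $-\Delta e \leq 2\overline{K}e^2$, so Lemma \ref{le-surface} applies with $A = 2\overline{K}$ and the threshold $\frac{\pi}{12A}$ becomes exactly $\frac{\pi}{24\overline{K}}$. (A side remark on your step (1): in the flat model the domain curvature term is absent, and on the round sphere it enters with a favorable sign, so the ``lower order terms'' you worry about are not an obstacle.)

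However, there is a genuine gap in your concluding step. Covering the compact sphere by geodesic balls or finitely many conformal charts only lets you apply Lemma \ref{le-surface} with radii $r$ bounded above by some fixed $r_0$, so the conclusion is a pointwise bound $e(u) \leq \frac{8}{\pi r_0^2}\int_{S^2} e(u)\,dv_{h_1}$, i.e.\ a finite bound of order $1/\overline{K}$ --- not $e(u)\equiv 0$. A uniformly small energy density does not by itself force a harmonic map to be constant (rescaling the target metric makes $e(u)$ arbitrarily small while the map stays nontrivial), and there is no ``standard removable-singularity or $\varepsilon$-regularity bootstrap'' that upgrades $C^0$-smallness of $e(u)$ to $du\equiv 0$. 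The missing idea --- and the one the paper's proof uses --- is to transplant the entire problem to $(\mathbb{R}^2, h_0)$ by stereographic projection: harmonicity and the total energy are conformally invariant in two dimensions, so $\widetilde{u}=u\circ\varphi$ is harmonic on the plane with $\int_{\mathbb{R}^{2}}e(\widetilde{u})\,dv_{h_{0}}=\int_{S^{2}}e(u)\,dv_{h_{1}}$ and satisfies $-\Delta e(\widetilde{u}) \leq 2\overline{K}\,e(\widetilde{u})^{2}$ for the flat Laplacian. One can then invoke Lemma \ref{le-surface} on balls $B(0,R)$ of \emph{arbitrarily large} radius while the integral on the right-hand side stays fixed, so $e(\widetilde{u})(0) \leq \frac{8}{\pi R^{2}}\int_{\mathbb{R}^{2}}e(\widetilde{u})\,dv_{h_{0}} \rightarrow 0$ as $R\rightarrow\infty$ (and similarly at every point after translation), forcing $e(\widetilde{u})\equiv 0$ and hence the contradiction with nontriviality. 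Without the non-compact domain and the $R\rightarrow\infty$ limit, your argument does not close.
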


 \begin{proof} As shown in the introduction, if $u$ is a conformal minimal embedding, then the Gauss-Bonnet formula gives the lower bound of the energy. Now we prove the general case.

  Let $\varphi$ be the conformal equivalence from
$(\mathbb{R}^{2},h_{0})$ to $( S^{2}\backslash \{$the south pole$\},h_{1})$ where $h_{0}$ is the flat metric, and
$\widetilde{u}=u\circ \varphi$. Let   $(y^1, y^2)$ and
$(x^{1}, \cdots, x^{2n})$ be coordinates  on $\mathbb{R}^{2}$ and $X$ respectively
such that $h_{0}=d(y^1)^{2}+d(y^2)^{2}$ and $\varphi^{*}h_{1}=\lambda
(d(y^1)^{2}+d(y^2)^{2})$, $\lambda (y^1,y^2)>0$. $\widetilde{u}$ is also 
  a non-trivial harmonic map from $(\mathbb{R}^{2},h_{0})$ to $(X,  g)$.  
     
    The Bochner formula for harmonic
maps (cf. \cite{Sc}) says $$ \frac{1}{2}\triangle e(\widetilde{u})= |
\nabla^g d\widetilde{u} |^{2}- \sum\limits_{\mu, \nu} g(
R^{g}( \widetilde{u}_{*}\theta_\mu,
\widetilde{u}_{*}\theta_{\nu})\widetilde{u}_{*}\theta_{\mu},\widetilde{u}_{*}\theta_{\nu})
$$ where $\theta_{\mu}=\frac{\partial}{\partial y^\mu}$,   $\triangle= \frac{\partial^{2}}{\partial (y^1)^{2}}+\frac{\partial^{2}}{\partial
   (y^2)^{2}}$,   and
$e(\widetilde{u})={\rm tr}_{h_{0}}(\widetilde{u}^{*} g)$. By 
Corollary  1.7 in \cite{SU}, the harmonic map $u$ is automatically
conformal since the domain is $S^2$, and therefore  is $\widetilde{u}$.  By  $\theta_1\bot \theta_2$, $\widetilde{u}_{*}\theta_1 $ and $\widetilde{u}_{*}\theta_2$ are perpendicular with respect to $g$.  Thus $$\sum\limits_{\mu, \nu} g(
R^{g}( \widetilde{u}_{*}\theta_\mu,
\widetilde{u}_{*}\theta_{\nu})\widetilde{u}_{*}\theta_{\nu},\widetilde{u}_{*}\theta_{\mu}) \leq  \overline{K}2|\widetilde{u}_{*}\theta_1|_g^2|\widetilde{u}_{*}\theta_2|_g^2\leq\overline{K}e(\widetilde{u})^{2}. $$
We obtain
$$  -\frac{1}{2}\Delta e(\widetilde{u})\leq
\overline{K}e(\widetilde{u})^{2}. $$ 

 If
$$\int_{\mathbb{R}^{2}}e( \widetilde{u})dv_{h_{0}}  =\int_{S^{2}}e(u)dv_{h_{1}} \leq \frac{\pi}{24 \overline{K}},$$ then  Lemma
\ref{le-surface} shows $$\sup\limits_{B_{g}(0, \frac{R}{4})}
e(\widetilde{u}) \leq C \frac{1}{\pi R^{2}}
\int_{B_{g}(0, R)}e(\widetilde{u})dv_{h_{0}} < C'\frac{1}{\pi
R^{2}},$$ for any $R>0$ and constants $C$ and $C'>0$.  
 By letting $R\longrightarrow \infty$, we obtain
 $e(\widetilde{u})\equiv 0$ on $\mathbb{R}^{2}$. It is a
 contradiction.   \end{proof}
 
 The lower bound formula for holomorphic spheres has been used to study singularities of the K\"{a}hler-Ricci flow in \cite{TZ}.

 \begin{proof}[Proof of (i) in Theorem  \ref{main}] Let $(X, \omega_g, g)$ be as in Theorem  \ref{main}.   Firstly, we claim that  there exists a constant $\tilde{\epsilon}= \tilde{\epsilon} (X, g)>0$ depending only on the complex structure and the K\"{a}hler metric such that  if there is another 
K\"{a}hler metric $\tilde{g}$ satisfying  $$ 
K_{\tilde{g}} E_{g}(\tilde{g}) \leq \tilde{\epsilon},
$$  then the universal covering space of $X$ is
contractible. Secondly,  we let $$\epsilon =\frac{1}{2} \min \big\{\sup_{g' \  {\rm with} \ \omega_{g'}\in[\omega_g]}\tilde{\epsilon}(X, g), \  \   3\times 10^8 \big\},$$ which depends only on the complex manifold and the K\"{a}hler  class $[\omega_g]$. 

Assume that  this claim is true. If there is  a 
K\"{a}hler metric $\tilde{g}$ satisfying  $$ 
K_{\tilde{g}} E_{g}(\tilde{g}) \leq \epsilon \leq  \tilde{\epsilon}(X, \hat{g})\leq \sup_{g' \  {\rm with} \ \omega_{g'}\in[\omega_g]}\tilde{\epsilon}(X, g),
$$ for a K\"{a}hler metric $\hat{g} $ with the K\"{a}hler form  $ \omega_{\hat{g}}\in[\omega_g]$, then we  prove (i) of  Theorem \ref{main} by $ E_{g}(\tilde{g})= E_{\hat{g}}(\tilde{g})$ and applying the claim to  $(X, \omega_{\hat{g}}, \hat{g})$. 

Now we prove the claim. 
 Assume that the universal covering space of
$X$ is not contractible, and  there is a
sequence of K\"{a}hler metrics $\{\tilde{g}_{k}\}$ such that $ K_{\tilde{g}_{k}}
E_{g}(\tilde{g}_{k}) < \frac{1}{k}$. We rescale the metrics, let 
$g_{k}=\frac{1}{E_{g}(\tilde{g}_{k})}\tilde{g}_{k}$, and   obtain 
$$ 
  E_{g}(g_{k})=1, \  \  \  \  \  \  \  K_{g_{k}}=K_{\tilde{g}_{k}}E_{g}(\tilde{g}_{k}) <
\frac{1}{k}.
$$ Since the holomorphic bisectional curvature can be written as the sum of two sectional curvatures, the holomorphic  bisectional curvature $ K_{g_{k}}^h <
\frac{2}{k}$. 
 By Proposition \ref{prop1}, there exists  a constant $\overline{C}>0$ independent of $k$ such that
\begin{equation}\label{eq3.1} g_{k}\leq \overline{C}g.\end{equation}

  For a fixed  $k$, we consider the $\alpha$-energy of Sacks
and Uhlenbeck \cite{SU}, and follow the  arguments in  the proof
of Theorem 2.7 in \cite{SU}.  The task is to use (\ref{eq3.1}) to give an upper bound of the volume of minimal spheres obtained by Theorem 5.8 in \cite{SU}.

For each $2>\alpha
>1$, the $\alpha$-energy is a real-valued $C^{2}$ function defined
on the Banach manifold $L^{2\alpha}_{1}(S^{2}, X)_{k}\subset
C^{0}(S^{2}, X)$ of $L^{2\alpha}_{1}$ Sobolev  mappings from $(S^{2}, h_{1})$ to
$(X, g_{k})$, $$
E_{\alpha,k}(u)=\int_{S^{2}}(1+
e_{k}(u))^{\alpha}dv_{h_{1}},$$ where
$e_{k}(u)={\rm tr}_{h_{1}}(u^{*} g_{k})$ and $h_{1}$ is the standard spherical  metric   on $S^{2}$.
We take base points $x_0\in X$ and $y_0\in S^2$, and 
denote  $\Omega(S^{2},X)$  the space of base point-preserving maps
from $S^{2}$ to $X$.  The map $ C^{0}(S^{2}, X) \rightarrow X $ given by $u \mapsto u(y_0)$ defines a 
 fibration structure 
$$\Omega(S^{2}, X)\hookrightarrow C^{0}(S^{2}, X) \rightarrow X$$ with fibre $\Omega(S^{2}, X)$. If $V$ is the volume of $(S^2, h_1)$, i.e. $V=\int_{S^{2}}dv_{h_{1}}$,
 then $E_{\alpha,k}^{-1}(V)$ is the set of trivial maps, i.e. the images are single points.  The fibration  admits a section $$X \rightarrow E_{\alpha,k}^{-1}(V)\subset C^{0}(S^{2},X),  \  \ \  \  \  x\mapsto u_x,$$ where $u_x(S^2)=\{x\}$. 
  Hence the long exact sequence of homotopy groups splits, i.e. 
 $$\pi_{m}(C^{0}(S^{2},X))=\pi_{m}(X )\oplus
\pi_{m}(\Omega(S^{2}, X)), $$ for any $m$. 
 Since  we have assumed that
the universal covering space of $X$ is not contractible,
$$\pi_{m+2}(X)=\pi_{m}(\Omega(S^{2}, X))\neq \{0\}$$ for some $m\geq 0$.
And we  identify $X$ with the set of trivial maps,
$E_{\alpha,k}^{-1}(V)$. An useful fact is that the homotopy type is
the same for all mapping spaces, from $C^{0}(S^{2}, X)$ to
$C^{\infty}(S^{2}, X)$ to $L^{2\alpha}_{1}(S^{2}, X)_{k}$ (See \cite{SU}). 

If $\pi_{0}(C^{0}(S^{2}, X))\neq \{0\}$, let $\mathcal{C}\subset
C^{0}(S^{2}, X)$ be  a path connected component not containing
$E_{\alpha,k}^{-1}(V)$. Since $E_{\alpha,k}$ satisfies the
Palais-Smale condition (C) (cf.  Theorem 2.1 in \cite{SU}), it achieves 
its minimum in every component of $L^{2\alpha}_{1}(S^{2}, X)_{k}$ by Theorem 2.2 in \cite{SU}.    By
Proposition 2.3 in \cite{SU}, the critical maps lie in
$C^{\infty}(S^{2}, X)$. In $\mathcal{C}$, we locate a
differentiable map $\hat{u}$, and let $$B=\max\limits_{S^{2}}
{\rm tr}_{h_{1}}(\hat{u}^{*} g).$$ 
By (\ref{eq3.1}), we obtain
$$ e_{k}(\hat{u}) \leq  \overline{C}\max\limits_{S^{2}}
{\rm tr}_{h_{1}}(\hat{u}^{*} g) =  \overline{C}B,$$ and then
$$ \min\limits_{\mathcal{C}} E_{\alpha,k}\leq E_{\alpha,k}(\hat{u})\leq
(1+\max\limits_{S^{2}} e_{k}(\hat{u}))^{\alpha}V \leq (1+
\overline{C}B)^{\alpha}V\leq (1+ \overline{C}B)^{2}V. $$
 Let $u_{\alpha,k}\in
C^{\infty}(S^{2}, X)$ be a  critical  map which minimizes in
$\mathcal{C}$. Then the energy of it satisfies
$$ V<E_{1,k}(u_{\alpha,k})=V+\int_{S^{2}}e_{k}(u_{\alpha,k})dv_{h_{1}}\leq E_{\alpha,k}(u_{\alpha,k})
\leq  (1+ \overline{C}B)^{2}V.$$ Thus there exists a
constant $\widehat{C}$ independent of $\alpha$ and $k$ such that
\begin{equation}\label{eq3.2}0<\int_{S^{2}}e_{k}(u_{\alpha,k})dv_{h_{1}}\leq\widehat{C}.\end{equation}

If $\pi_{0}(C^{0}(S^{2}, X))= \{0\}$, we choose a non-zero homotopy
class $[\gamma ]\in \pi_{m}(\Omega(S^{2}, X))$. Note that $\gamma :
S^{m} \rightarrow  \Omega(S^{2}, X)$ has its image lying in
$C^{0}(S^{2}, X)$ and is not homotopic to any map
$\widetilde{\gamma}:S^{m}\rightarrow E_{\alpha,k}^{-1}(V)$.
 In
fact,  we can assume that, for any $z\in S^{m}$, $\gamma(z)$ is
differentiable, and depends continuously on $z$. If we   let
$$B=\max\limits_{z'\in S^{m}, y\in S^{2}}
{\rm tr}_{h_{1}}((\gamma(z'))^{*}g )(y),$$ then, for all $z\in S^{m}$,   (\ref{eq3.1}) 
implies  $$ e_{k}(\gamma(z))\leq \overline{C}\max\limits_{ S^{m},  S^{2}}
{\rm tr}_{h_{1}}((\gamma)^{*}g )  = \overline{C}B $$  and 
 $$ E_{\alpha,k}(\gamma(z))\leq
(1+\max\limits_{S^{2}} e_{k}(\gamma(z)))^{\alpha}V \leq (1+
\overline{C}B)^{\alpha}V.$$   

If
$E_{\alpha,k}$ has no critical value in $(V,(1+
\overline{C}B)^{\alpha}V)$, by Theorem 2.2 and Theorem 2.6 in \cite{SU}
there exists a deformation retraction $$\rho: E_{\alpha,k}^{-1}([V,(1+
\overline{C}B)^{\alpha}V]) \rightarrow E_{\alpha,k}^{-1}(V).$$
Then $\rho \circ \gamma : S^{m}\rightarrow E_{\alpha,k}^{-1}(V)$
is homotopic to $\gamma$, which is a contradiction.  
 Let
$u_{\alpha,k}\in C^{\infty}(S^{2},M)$ be a critical  map such that
$$ V< E_{\alpha,k}(u_{\alpha,k})\leq
 (1+
\overline{C}B)^{\alpha}V\leq (1+ \overline{C}B)^{2}V.$$
Then the energy of it satisfies
$$V<E_{1,k}(u_{\alpha,k})=V+\int_{S^{2}}e_{k}(u_{\alpha,k})dv_{h_{1}}\leq E_{\alpha,k}(u_{\alpha,k})
\leq  (1+ \overline{C}B)^{2}V.$$ 

In both cases, either $C^{0}(S^{2}, X)$ is connected or not, we obtain a critical map $u_{\alpha,k}$ with uniformly bounded  energy, i.e. 
\begin{equation}\label{eq3.3} 0<\int_{S^{2}}e_{k}(u_{\alpha,k})dv_{h_{1}}\leq\widehat{C}\end{equation} for  a
constant $\widehat{C}$ independent of $\alpha$ and $k$. 

 Now by Theorem 4.7 in \cite{SU}, if $\sup\limits_{S^{2}}
e_{k}(u_{\alpha,k})$  is uniformly bounded in $\alpha$,
$u_{\alpha,k}$ $C^{1}$-converges to a harmonic map
$u_{k}:(S^{2},h_{1})\rightarrow  (X, g_{k})$. If
$\sup\limits_{S^{2}} e_{k}(u_{\alpha,k})$  is unbounded in $\alpha$,
then there exists a non-trivial harmonic map
$u_{k}:(S^{2},h_{1})\rightarrow  (X, g_{k})$.   Moreover,
$$0<\int_{S^{2}}e_{k}(u_{k})dv_{h_{1}}\leq \limsup\limits_{\alpha \rightarrow
1}
\int_{S^{2}}e_{k}(u_{\alpha,k})dv_{h_{1}}\leq\widehat{C},$$
in both cases.
By Lemma \ref{le-gap} and $K_{g_{k}} \leq \frac{1}{k}$, we obtain
$$  k \frac{\pi}{24}\leq \int_{S^{2}}e_{k}(u_{k})dv_{h_{1}}\leq
\widehat{C}.$$   When $k\gg 1$, it is  a
contradiction.  We have proved the claim and therefore also (i) in Theorem \ref{main}.  
\end{proof}

\begin{proof}[Proof of (ii) in   Theorem  \ref{main}]
Assume that the holomorphic cotangent bundle $T^{*(1,0)}X$ of $X$ is not nef, and there is a sequence of K\"{a}hler metrics $g_k$ such that $$E_g(g_k)=1, \  \  \ {\rm and} \ \  \  K_{g_k}\leq \frac{1}{k}.  $$ Proposition \ref{prop1} holds, and thus $$g_k \leq \bar{C}g $$ for a constant $\bar{C}>0$.  

We regard $g_k$ as an Hermitian metric on the vector bundle  $T^{(1,0)}X$.  
If $z_1, \cdots, z_n$ are local normal coordinates on $X$ at $x$ and $\phi_1=\partial/\partial z_i, \cdots, \phi_n=\partial/\partial z_n$ are   orthonormal  frames of $T^{(1,0)}_xX$ with respect to $g_k$, then the curvature operator   of $g_k$ reads $$\Theta_{g_k}(T^{(1,0)}X)=\sum R_{\mu\bar{\nu}\lambda\bar{\upsilon}}dz_\mu \wedge d\bar{z}_\nu \otimes \phi_\lambda^\ast  \otimes \phi_\upsilon, $$ which is an hermitian  $(1,1)$-form with values in Hom$(T^{(1,0)}X, T^{(1,0)}X)$.  
Since the  holomorphic bisectional curvature can be written as sum of two sectional curvatures,  we have  $K^h_{g_k}\leq \frac{2}{k}$ and
\begin{eqnarray*} g_k(\langle \Theta_{g_k}(T^{(1,0)}X), \xi \wedge\bar{\xi} \rangle \zeta, \zeta) & =  & \sum R_{\mu\bar{\nu}\lambda\bar{\upsilon}}\xi_\mu\bar{\xi}_\nu \zeta_\lambda \bar{\zeta}_\upsilon 
\\ & \leq & \frac{2}{k}|\xi|_{g_k}^2|\zeta|_{g_k}^2 \\ & \leq &  \frac{2\bar{C}}{k}|\xi|_{g}^2|\zeta|_{g_k}^2,
\end{eqnarray*}
 for any two vectors $\xi$ and $\zeta\in T^{(1,0)}_xX$.  Therefore  $$\sqrt{-1}\Theta_{g_k}(T^{(1,0)}X) \leq   \frac{2\bar{C}}{k}  \omega_g \otimes {\rm  Id}_{T^{(1,0)}X}$$  in the sense of Griffiths.   $g_k$ induces an Hermitian metric $g_k^*$ on the holomorphic cotangent bundle $T^{*(1,0)}X$, and hence  a metric  on the symmetric power $S^mT^{*(1,0)}X$ for any $m\geq 1$. 
  The curvature of the induced metric   $$\sqrt{-1}\Theta_{(g_k^*)^{\otimes m}}(S^mT^{*(1,0)}X)\geq -  \frac{2\bar{C}}{k} m\omega_g \otimes {\rm  Id}_{S^mT^{*(1,0)}X},$$     and   $T^{*(1,0)}X$ is nef by Theorem 1.12 of \cite{DPS}. It is a contradiction.  
\end{proof}

\end{document}